\documentclass[11pt]{article}
\usepackage[a4paper,margin=2.35cm]{geometry}
\usepackage{amsmath,amssymb,amsthm,mathtools}
\usepackage{booktabs,tabularx,array}
\usepackage{graphicx}
\usepackage{microtype}
\usepackage{xcolor}
\usepackage{enumitem}
\usepackage[hidelinks]{hyperref}
\hypersetup{
  pdftitle={An Exact Solution of the Two-Ball Multi-Look Search Game with Three Boxes and Heterogeneous Costs},
  pdfauthor={Igor Kleiner},
  pdfsubject={An exact solution of a heterogeneous multi-look search game},
  pdfkeywords={search games, multi-look search, zero-sum games, Bernstein basis}
}
\setlist{nosep}
\allowdisplaybreaks[3]

\newtheorem{theorem}{Theorem}
\newtheorem{lemma}{Lemma}
\newtheorem{proposition}{Proposition}

\theoremstyle{definition}

\newcommand{\Va}{V_1}
\newcommand{\Vb}{V_2}
\newcommand{\Vc}{V_3}
\newcommand{\HH}{\mathcal H}

\title{An Exact Solution of the Two-Ball Multi-Look Search Game\\with Three Boxes and Heterogeneous Costs}
\author{Igor Kleiner\\
Department of Data Science, HIT -- Holon Institute of Technology\\
Holon, Israel\\
\texttt{igkleiner@gmail.com}\\
\href{https://orcid.org/0000-0002-8361-8505}{ORCID: 0000-0002-8361-8505}}
\date{}

\begin{document}
\maketitle

\begin{abstract}
A Hider distributes two identical balls among three boxes whose search costs
satisfy $a\ge b\ge c>0$. A Searcher opens boxes adaptively until both balls
are found; every opening incurs the corresponding box cost and recovers at
most one ball. We give an exact solution of this heterogeneous multi-look
search-cost game. The value is the maximum of three explicit rational
functions. In the three parameter regimes, an optimal Hider strategy is the
product-form distribution restricted respectively to three, five, or all six
placements. Every non-wasteful deterministic Searcher policy is
payoff-equivalent to one of $72$ elementary decision trees, which induce only
$42$ distinct opening-count profiles. An analytic argument settles the first
regime. Two exact Searcher mixtures settle the second, and four settle the
third. Feasibility of the parameterized mixtures over the full cost region is
established by exact rational Bernstein-basis certificates. Independent
implementations reproduce the policy set, the zero-sum linear-program values,
and all $154$ dyadic nodes examined by the Bernstein verifier.
\end{abstract}

\section{Introduction}
Search games form a classical branch of zero-sum game theory. Standard
references include the monographs of Gal, Garnaev, and Alpern and Gal, as well
as broad surveys and edited collections
\cite{gal1980,garnaev2000,alperngal2003,alpernetal2013,benkoski1991,hohzaki2016,lidbetter2025review}.
In a discrete box-search game, a Hider chooses one or more locations and a
Searcher inspects locations sequentially. Early work studied intelligent
evaders, repeated inspections, and two-box models
\cite{robertsgittins1978,gittinsroberts1979,ruckle1991,sharlin1987}.
More recent work develops structural and computational methods for mixed
strategies in discrete-location search games
\cite{hellerstein2019,clarksonetal2023,bui2024,clarksonlin2025}.

Multiple hidden objects create a further difficulty: every observation changes
the residual search problem. Lidbetter introduced a general multiple-object
search-game framework, and Lidbetter and Lin formulated the multi-look model
studied here \cite{lidbetter2013,lidbetterlin2019}. Related models include
biological caching games and weighted network searches with several hidden
objects or search teams \cite{alpernetal2012,yolmeh2021}. In the multi-look
search-cost game, the Hider places $k$ balls in $n$ boxes and the Searcher pays
a box-dependent cost whenever she opens a box. If the box is nonempty, exactly
one ball is recovered; otherwise the Searcher learns that the box is empty.
Lidbetter and Lin solved the search-cost game for equal box costs and for two
boxes, and obtained partial results in the general heterogeneous case
\cite{lidbetterlin2019}. Their computations also suggested that an optimal
Hider distribution should retain product-form probabilities after restricting
to an appropriate subset of placements.

We solve the smallest case beyond the previously solved two-box model in which
both the number of boxes and the number of balls exceed one: three boxes, two
balls, and arbitrary costs $a\ge b\ge c>0$. To the best of our knowledge, a
complete parametric solution of this case has not appeared previously.

\paragraph{Contributions.}
The paper makes three contributions.
\begin{enumerate}[label=(\roman*)]
\item We obtain the exact value and exhibit a nested family of optimal Hider
strategies. Their supports contain respectively three, five, or all six
placements.
\item We give a transparent finite reduction: $72$ elementary adaptive search
trees collapse to $42$ distinct opening-count profiles.
\item We provide exact Searcher certificates for every point of the parameter
space. The first regime is analytic; the other two are computer-assisted, with
all polynomial sign conditions certified in rational Bernstein form and
checked by independent validation programs.
\end{enumerate}

The proof uses the computer only for finite exact symbolic verification;
floating-point linear programs are confined to an independent numerical audit.
The theorem, strategies, semialgebraic regions, and proof obligations are
stated explicitly in the paper and supplement. The supplied code regenerates
every policy and checks every exact identity.

\section{Model and finite reduction}
Label the boxes $A,B,C$, with respective search costs $a,b,c$. The pure
Hider placements are
\[
\HH=\{002,011,020,101,110,200\},
\]
where the coordinates give the numbers of balls in $A,B,C$; for example,
$101$ means one ball in $A$, none in $B$, and one in $C$. The Searcher
observes only success or failure
at each opening and may condition subsequent openings on the full history.
The payoff is the total cost paid before both balls are found. The value is
positively homogeneous: for every $\lambda>0$,
\[
V(\lambda a,\lambda b,\lambda c)=\lambda V(a,b,c).
\]
This permits the scale normalization used later in the proof.

Any policy that opens a box already known to be empty, or continues after both balls have been found, is weakly dominated by deleting those openings. It therefore suffices to consider non-wasteful policies. For a deterministic policy $s$, let
\[
\mathbf N_s(x)=(N_a(x),N_b(x),N_c(x))
\]
denote the numbers of openings of the three boxes against placement $x$.
The scalar search cost is
\[
C_s(x)=aN_a(x)+bN_b(x)+cN_c(x).
\]

\begin{proposition}[Elementary policy reduction]\label{prop:72}
Every non-wasteful deterministic Searcher policy is payoff-equivalent to one
of $72$ elementary decision trees. These $72$ trees generate exactly $42$
distinct six-placement opening-count profiles.
\end{proposition}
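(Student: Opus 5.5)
The plan is to describe every non-wasteful deterministic policy by its restriction to \emph{reachable} histories. Two policies that agree on every history that can actually arise against some placement in $\HH$ give the same vector $\mathbf N_s(x)$ for each $x$, so they are payoff-equivalent. After this pruning, a policy is just a finite rooted tree. Each internal node is labelled by a box that is not yet known to be empty, and each node has a success child and a failure child, deleting any child that no placement consistent with the history can reach. I would define the \emph{elementary} trees to be exactly these pruned trees, and then count them by splitting on the first opening $X\in\{A,B,C\}$ and on its outcome.

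\emph{Failure after the first opening.} If $X$ is found empty, the two balls lie in the other two boxes $Y,Z$, with residual configurations $(2,0),(1,1),(0,2)$. This is the two-box, two-ball game. I would check that it has exactly $4$ pruned trees:
\begin{itemize}
\item first choose $Y$ or $Z$ (say $Y$);
\item on failure, $Z$ holds both balls and the rest of the policy is forced;
\item on success, choose $Y$ or $Z$ for the single remaining ball, and any failure after that forces the other box.
\end{itemize}
This gives $2\cdot 2=4$ trees.

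\emph{Success after the first opening.} If $X$ yields a ball, exactly one ball remains. It can be in $X$ (placement with two balls in $X$), in $Y$, or in $Z$, and each box holds at most one. This is a one-ball, three-location search. A non-wasteful policy opens candidate boxes in some order until it succeeds, and the last box is forced once two failures occur. The pruned trees therefore correspond to the $3!=6$ orderings of $\{X,Y,Z\}$. These are pairwise distinct because they already differ in which box is opened first or second on reachable histories.

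\emph{Combining the two branches.} The success branch and the failure branch can be chosen independently, so the total is $3\cdot 6\cdot 4=72$. I also need the converse: every non-wasteful policy arises this way. At every reachable node the policy must open some box not known to be empty, so its pruned restriction is one of the trees just listed. A short check is needed here. Non-wastefulness is always formulated relative to the history, but no box ever becomes ``full'', so the only forbidden actions are reopening a known-empty box and continuing after both balls are found. This is what makes the branching counts above exact.

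\emph{Counting distinct profiles.} For each of the $72$ trees I would compute the $6\times 3$ matrix $\bigl(\mathbf N_s(x)\bigr)_{x\in\HH}$ by following the unique path for each placement, and then count the distinct matrices. This step is finite bookkeeping, and I would do it by exact enumeration, checked by an independent script. To explain the collapse from $72$ to $42$ by hand, I would look for pairs of trees that differ only in the order of openings, where the reordering changes which box is opened when but not how often each box is opened against any placement. Typical examples are the following.
\begin{itemize}
\item Two one-ball orderings that swap the last two boxes, when both continuations force the same counts.
\item Trees starting with $X$ versus $Y$ that, against every placement, open the same multiset of boxes.
\end{itemize}
The main obstacle is making this collapse transparent rather than merely computed. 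My first attempt would be to group the trees by the profile on the placements $200,020,002$, where all three counts are forced quite rigidly. Within each group I would then compare the three mixed placements $110,101,011$, and exhibit the $30$ coincidences explicitly as equivalence classes.
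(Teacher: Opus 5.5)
Your proposal follows the paper's proof: you split on the first box, count $3!=6$ one-ball orderings after a success and $2\cdot2=4$ two-box continuations after a failure to get $72$, and obtain $42$ by exact enumeration. One correction to your optional hand explanation: different orderings $\sigma$ never give equal counts, because the profile reveals $\sigma_1$ and $\sigma_2$, so swapping the last two boxes causes no collapse; instead, $S_{jk}$ and $S_{kj}$ always produce identical counts ($72\to54$), and the remaining $12$ coincidences are cross-first-box pairs such as $\mathcal P(A\mid BAC\mid R_{BC})$ and $\mathcal P(B\mid ABC\mid R_{AC})$, giving $42$.
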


\begin{proof}
Choose the first box in $3$ ways. If the first opening succeeds, exactly one
ball remains. A non-wasteful deterministic continuation is therefore an
ordering of the three boxes, giving $3!=6$ choices: a failed opening removes a
box from consideration, and the process stops at the first success.

If the first opening fails, both balls lie in the other two boxes, say $j$ and
$k$. The Searcher chooses which of them to open next. After success she may
either repeat that box or switch to the other; after failure the other box is
known to contain both balls. Thus there are $2\cdot2=4$ non-wasteful two-box
continuations. Hence there are $3\cdot6\cdot4=72$ elementary trees. Exact
evaluation against the six placements and duplicate removal leave $42$
distinct profiles. Both counts are independently reproduced by the supplied
programs.
\end{proof}

Thus the game is a finite zero-sum matrix game with six Hider rows and $42$
Searcher columns. Randomized Searcher policies are probability mixtures of
these deterministic columns and therefore lie in their convex hull.
Appendix~\ref{app:policies} gives a compact notation for the $14$ columns that
actually occur in the upper-bound certificates; all $42$ opening-count profiles
are listed in the supplement.

\section{Value candidates and nested Hider supports}
For $m=1,2,3$, let
\[
T_m=\sum_{x_1+x_2+x_3=m}a^{x_1}b^{x_2}c^{x_3}
\]
be the complete homogeneous polynomial of degree $m$ in $a,b,c$. Explicitly,
\begin{align*}
T_1&=a+b+c,\\
T_2&=a^2+b^2+c^2+ab+ac+bc,\\
T_3&=a^3+b^3+c^3+a^2b+a^2c+ab^2+b^2c+ac^2+bc^2+abc.
\end{align*}
Define three nested sets of Hider placements,
\begin{align*}
\HH_1&=\{101,110,200\},\\
\HH_2&=\HH_1\cup\{011,020\},\\
\HH_3&=\HH_2\cup\{002\}=\HH.
\end{align*}
For $i=1,2,3$, let $h_i$ be the product-form distribution on $\HH_i$:
the probability of a placement $x=(x_1,x_2,x_3)$ is proportional to
$a^{x_1}b^{x_2}c^{x_3}$. In the row order
$(002,011,020,101,110,200)$, these distributions are
\begin{align*}
h_1&=\frac1{T_1}(0,0,0,c,b,a),\\
h_2&=\frac1{T_2-c^2}(0,bc,b^2,ac,ab,a^2),\\
h_3&=\frac1{T_2}(c^2,bc,b^2,ac,ab,a^2).
\end{align*}
The common factor $a$ has been cancelled in the expression for $h_1$.
Thus $h_1$ forces at least one ball into the most expensive box, $h_2$
excludes only placement $002$, and $h_3$ uses all six placements.

The corresponding guaranteed values are
\begin{align}
\Va&=a+\frac{T_2}{T_1},\label{eq:V1}\\
\Vb&=\frac{2T_3-c^2(a+b+2c)}{T_2-c^2},\label{eq:V2}\\
\Vc&=\frac{2T_3}{T_2}.\label{eq:V3}
\end{align}
The compact formula for $V_2$ is obtained from the full product-form
expression by removing the contribution of the excluded placement $002$;
its expanded numerator is recorded in the supplement.

\begin{theorem}[Value and optimal Hider support]\label{thm:main}
For all $a\ge b\ge c>0$,
\[
\boxed{V(a,b,c)=\max\{\Va,\Vb,\Vc\}.}
\]
Whenever $\Va$, $\Vb$, or $\Vc$ is maximal, $h_1$, $h_2$, or $h_3$,
respectively, is an optimal Hider strategy. If several candidate values tie
for the maximum, every convex combination of the corresponding Hider
distributions is also optimal.
\end{theorem}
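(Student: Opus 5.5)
The plan is the standard lower-bound/upper-bound sandwich in the finite matrix game of Proposition~\ref{prop:72}. The game has six Hider rows and $42$ Searcher columns, so it has a value, and $V\ge \max_i \min_s h_i\cdot C_s$. The first step is to show, for each $i$, that $h_i$ guarantees exactly $V_i$, i.e.\ $\min_{s} \sum_x h_i(x)C_s(x)=V_i$. I would prove this by backward induction on the elementary trees rather than by checking all $42$ columns.

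After a first success in a box, the posterior on the remaining single ball is again product form, proportional to $a,b,c$ on the surviving boxes. For one ball with probabilities proportional to costs, every non-wasteful ordering has the same expected cost, since the indifference is exactly the product-form property. Similarly, after a failure, the conditional two-ball problem on two boxes is product form. Lidbetter--Lin's two-box analysis, or a direct computation, shows the Searcher is indifferent among her $4$ continuations there, and likewise among the $3$ first boxes. The exception is $h_1$ and $h_2$: some first moves are strictly worse there. For example, opening $C$ first under $h_1$ is dominated, since $C$ contains at most one ball. In those cases one shows that the minimizing first moves give exactly $V_i$ and the others give at least $V_i$. Summing the resulting telescoping expressions yields \eqref{eq:V1}--\eqref{eq:V3}. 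This step is routine algebra with $T_1,T_2,T_3$.

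For the upper bound, I would partition the region $a\ge b\ge c>0$, normalized by homogeneity (say $a=1$), into the three semialgebraic sets $R_i=\{V_i=\max_j V_j\}$. On each $R_i$ I need a Searcher mixture $\sigma_i(a,b,c)$ over the $42$ columns with $\sum_s\sigma_i(s)C_s(x)\le V_i$ for every $x\in\HH$.

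On $R_1$ I would try an analytic construction. Open $A$ first; after a success, search the remaining ball in a cost-proportional randomized order; after a failure, the balls are in $B,C$. The mixture weights are chosen to equalize the costs of $101,110,200$ at $V_1$. The remaining rows $002,011,020$ then need $C_s(x)\le V_1$, and this should reduce to the defining inequality $V_1\ge V_2$, presumably via a factorization of $V_1-V_2$.

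On $R_2$ and $R_3$, I would solve the equalization linear systems on supports of two or four columns with rational-function weights. Equality holds on the support rows of $h_i$. What remains are the sign conditions: nonnegativity of all weights, and $\sum\sigma C(x)\le V_2$ for row $002$ on $R_2$. Each of these is a polynomial inequality on a compact normalized region. I would certify them by subdividing the region dyadically and checking positivity of Bernstein coefficients in exact rational arithmetic.

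The main obstacle is this upper bound on $R_2\cup R_3$. A single mixture will not work across the whole regime, so the region must be split into subregions with different mixtures. On the boundary curves $V_i=V_j$ the polynomials vanish, which defeats naive Bernstein positivity. I would handle this by factoring out the boundary polynomial $V_i-V_j$ explicitly before certifying the cofactor.

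Finally, for ties: if $V_i=V_j$ is maximal, both $h_i$ and $h_j$ guarantee $V$. A convex combination guarantees at least the same, because $\min_s$ of an average is at least the average of the minima. So every such combination is optimal.
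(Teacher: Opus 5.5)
Your overall architecture matches the paper's. You get lower bounds from $h_1,h_2,h_3$ by conditioning on the first opening, use an analytic ``open $A$ first'' Searcher on the $V_1$ region, use exact rational mixtures with Bernstein certificates on the other two regions, and use convexity for ties. However, two steps fail as written.

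First, the $V_1$ step is incomplete. You leave the continuation after a failed opening of $A$ unspecified, and you expect rows $002,011,020$ to follow from $V_1\ge V_2$ alone. After that failure the Searcher is in the full two-ball game on $B,C$, where the best she can guarantee is the two-box value $\max\{J(b,c),E(b,c)\}$. She needs this value to be at most $U_3(a,b,c)$. The factorizations \eqref{eq:factor12}--\eqref{eq:factor13} show that $V_1-V_2$ is a positive multiple of $U_3(a,b,c)-J(b,c)$ and $V_1-V_3$ is a positive multiple of $U_3(a,b,c)-E(b,c)$. So you need both $V_1\ge V_2$ and $V_1\ge V_3$, and the first does not imply the second. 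At $(a,b,c)=(1,0.55,0.55)$ one has $V_3\approx2.443>V_1\approx2.432>V_2\approx2.419$, and $E(b,c)=8b/3>U_3(a,b,c)$. Against the placements $002,011,020$ weighted $c^2:bc:b^2$, every $A$-first policy then costs at least $a+E(b,c)>V_1$ on average.

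Second, your supports are too small. Where $V_2$ is the value, $h_2$ has full support on the five rows of $\HH_2$. Complementary slackness therefore forces an optimal Searcher mixture to cost exactly $V_2$ on each of these rows, and every column it uses must be a best response to $h_2$. Averaging the five row equations against $h_2$ yields the normalization, and the five equations are generically independent. Hence a support of two or four columns gives an overdetermined system, which generically has no solution on an open set of $(r,u)$. The same count requires six columns on the $V_3$ region. The paper uses the five-column supports $\{26,27,33,38,39\}$ or $\{26,33,37,38,41\}$ according to the sign of $Q_2$, and four six-column supports selected by the signs of $X,Y,Z$. ``Two'' and ``four'' count mixtures, not columns.

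Two smaller corrections:
\begin{itemize}
\item Under $h_1$ and $h_2$ no first move is strictly worse. Each first box, including $C$ under $h_1$, followed by its best continuation costs exactly $V_i$. Strict losses come only from continuations that open a box excluded by the restricted support, such as reopening $C$ after a success there.
\item Clearing denominators in the weights needs a sign argument. In the branch $V_2$, $Q_2\le0$, the factor $2ru^2+5ru+2r-u$ vanishes inside the unit square and is positive only because $A_2\ge0$.
\end{itemize}
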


The supports therefore grow according to the hierarchy
\[
|\HH_1|=3,\qquad |\HH_2|=5,\qquad |\HH_3|=6.
\]
Figure~\ref{fig:regions} displays the three value regions after the scale-free
normalization $r=b/a$ and $u=c/b$.

\begin{figure}[!ht]
\centering
\includegraphics[width=.66\textwidth]{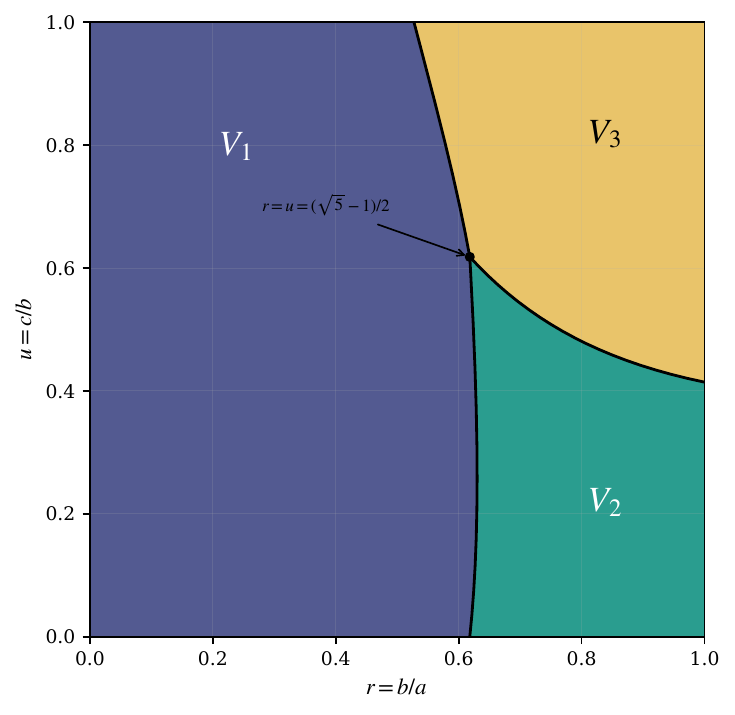}
\caption{The dominant candidate value on the closure $[0,1]^2$ of the
normalized domain, where $r=b/a$ and $u=c/b$. The black curves are the portions
of the pairwise-equality curves that separate different dominant regimes. They
meet at $r=u=(\sqrt5-1)/2$, where $V_1=V_2=V_3$.}
\label{fig:regions}
\end{figure}

\section{Hider lower bounds}
We first record four elementary values that arise after conditioning on the
first search. Let
\begin{align*}
U_3(a,b,c)&=\frac{a^2+b^2+c^2+ab+ac+bc}{a+b+c},\\
U_2(x,y)&=\frac{x^2+xy+y^2}{x+y},\\
J(x,y)&=x+U_2(x,y),\\
E(x,y)&=\frac{2(x^3+x^2y+xy^2+y^3)}{x^2+xy+y^2}.
\end{align*}
Here $U_3$ and $U_2$ are the one-ball product-form values. The quantity
$J(x,y)$ is the value against the two-box distribution that forces one ball
into the $x$-box and distributes the other proportionally to $x,y$; $E(x,y)$
is the expected cost against the full two-ball product-form distribution on
two boxes. These are standard consequences of the one-ball and two-box
solutions in~\cite{lidbetterlin2019}.

\begin{lemma}\label{lem:lower}
For every non-wasteful Searcher policy $s$,
\[
\mathbb E_{h_1}[C_s]\ge\Va,
\qquad
\mathbb E_{h_2}[C_s]\ge\Vb,
\qquad
\mathbb E_{h_3}[C_s]=\Vc.
\]
For an arbitrary Searcher policy, all three expectations are at least the
corresponding displayed values.
\end{lemma}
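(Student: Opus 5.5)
The plan is to condition on the first box opened and use the fact that, under each $h_i$, the posterior after the first opening is again one of the elementary distributions behind $U_3$, $U_2$, $J$ or $E$. By the reduction preceding Proposition~\ref{prop:72}, it suffices to treat non-wasteful deterministic policies. Deleting wasteful openings only lowers the cost against every placement. A randomized policy is a mixture of deterministic ones, so its expectation is an average of the deterministic expectations, which gives the final sentence of the lemma. For each $h_i$ and each first box $A$, $B$ or $C$, I will compute the success probability, the posterior in both branches, and a lower bound (or exact value) for the continuation cost. For these continuation bounds I use the two elementary facts behind $U_3$, $U_2$, $J$ and $E$ from~\cite{lidbetterlin2019}. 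Against a one-ball distribution proportional to the costs, every non-wasteful order has the same expected cost, namely $U_3$ or $U_2$. Against the two-box distributions defining $J$ and $E$, every policy costs at least $J$, respectively exactly $E$.

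\textbf{The distribution $h_3$.} Opening any box first, the posterior after success is a one-ball distribution proportional to $(a,b,c)$ on all three boxes. After failure in box $i$, the posterior is the two-ball product form on the other two boxes. Hence the expected cost is independent of the continuation, and for first box $i$ with cost $c_i$ it equals
\[
c_i+\Pr(\text{hit }i)\,U_3+\Pr(\text{miss }i)\,E(\cdot,\cdot).
\]
I then check the three polynomial identities showing that each expression equals $2T_3/T_2$; by symmetry it is really one identity. This gives equality for $h_3$.

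\textbf{The distribution $h_1$.} Opening $A$ first always succeeds, and the remaining ball is distributed proportionally to $(a,b,c)$. This gives exactly $a+U_3=\Va$. Opening $B$ first succeeds with probability $b/T_1$, after which the remaining ball is known to be in $A$ and costs $a$. On failure the posterior is $200\propto a$, $101\propto c$, which is the $J(a,c)$ situation. So the cost is at least
\[
b+\frac{b}{T_1}\,a+\frac{a+c}{T_1}\,J(a,c).
\]
Opening $C$ first gives, symmetrically, at least $c+\frac{c}{T_1}a+\frac{a+b}{T_1}J(a,b)$. Each of these must be shown to be $\ge\Va$, using $a\ge b\ge c$.

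\textbf{The distribution $h_2$.} I compute the posteriors in each branch:
\begin{itemize}
\item after success in $A$: $(101,110,200)\propto(ac,ab,a^2)$, a one-ball product form with continuation $U_3$;
\item after success in $B$: $(011,020,110)\propto(bc,b^2,ab)$, again $U_3$;
\item after success in $C$: $(011,101)\propto(b,a)$, with continuation $U_2(a,b)$;
\item after failure in $A$: $(011,020)\propto(c,b)$, with continuation at least $J(b,c)$;
\item after failure in $B$: $(101,200)\propto(c,a)$, with continuation at least $J(a,c)$;
\item after failure in $C$: $(020,110,200)\propto(b^2,ab,a^2)$, which is the full product form on $\{A,B\}$, with continuation exactly $E(a,b)$.
\end{itemize}
This produces three explicit rational lower bounds, one per first box, each to be compared with $\Vb$.

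\textbf{Main obstacle.} The main work is the five rational inequalities (two for $h_1$, three for $h_2$). Some of them may in fact hold with equality, for example the first-box-$C$ case for $h_2$. My first attempt for each is to clear the positive denominators and substitute $c=c$, $b=c+t$, $a=c+t+s$ with $s,t\ge0$. I expect the resulting polynomial differences to have only nonnegative coefficients, which would settle the sign. If some coefficient is negative, I would instead pair terms by AM--GM or use the Bernstein-type check on the normalized variables $r=b/a$, $u=c/b$ that the paper already employs.
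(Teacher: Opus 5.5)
Your proposal is correct and is essentially the paper's proof. You condition on the first box, recognize the posteriors as the $U_3$, $U_2$, $J$, $E$ situations, and compare with the candidate values; for $h_3$ the paper cites the product-form lemma of Lidbetter and Lin, which your first-box conditioning rederives directly.

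The only adjustment concerns your ``main obstacle''. All five first-box bounds for $h_1$ and $h_2$ are exact identities, equal to $\Va$ or $\Vb$ for all positive $a,b,c$. For example, the $B$-first bound for $h_1$ has numerator $2a^2+2ab+2ac+b^2+bc+c^2=T_1\Va$. So no ordering assumption, substitution, or sign check is needed.
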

\begin{proof}
It is enough to condition on the first box opened. Under $h_1$, the three
possible first choices give the following lower bounds on the expected cost:
\begin{align*}
A:&\quad a+U_3(a,b,c)=\Va,\\
B:&\quad b+\frac{b}{T_1}a+\frac{a+c}{T_1}J(a,c)=\Va,\\
C:&\quad c+\frac{c}{T_1}a+\frac{a+b}{T_1}J(a,b)=\Va.
\end{align*}
For example, after a successful first search of $B$, the remaining ball is
known to be in $A$; after a failed first search of $B$, the conditional
problem is the forced-one two-box game on $A,C$. Thus every first action has
conditional value at least $\Va$.

Let $D_2=T_2-c^2$. Under $h_2$, conditioning on the first box similarly gives
\begin{align*}
A:&\quad a+\frac{aT_1}{D_2}U_3(a,b,c)
       +\frac{b(b+c)}{D_2}J(b,c)=\Vb,\\
B:&\quad b+\frac{bT_1}{D_2}U_3(a,b,c)
       +\frac{a(a+c)}{D_2}J(a,c)=\Vb,\\
C:&\quad c+\frac{c(a+b)}{D_2}U_2(a,b)
       +\frac{a^2+ab+b^2}{D_2}E(a,b)=\Vb.
\end{align*}
The coefficients are precisely the probabilities of success and failure of
the first opening. Hence every first action has conditional value at least
$\Vb$.

Finally, $h_3$ is the full product-form distribution. It is equalizing over
non-wasteful policies by the general product-form lemma
of~\cite{lidbetterlin2019}, giving expected cost $2T_3/T_2=\Vc$. Deleting
redundant openings can only reduce cost, so an arbitrary policy has expected
cost at least $\Vc$.
\end{proof}
By linearity, Lemma~\ref{lem:lower} also applies to mixed Searcher strategies,
so
\[
V(a,b,c)\ge\max\{\Va,\Vb,\Vc\}.
\]

\section{Upper bound in the \texorpdfstring{$V_1$}{V1} regime}
If $\Va\ge\Vb$ and $\Va\ge\Vc$, open the $a$-box first. After a success,
use an optimal one-ball strategy, whose additional expected cost is
$U_3(a,b,c)$. After a failure, use the optimal two-box strategy on costs
$b,c$, whose value is
\[
\max\{J(b,c),E(b,c)\}.
\]
The factorizations
\begin{align}
\Va-\Vb
&=\frac{b(b+c)}{a^2+ab+ac+b^2+bc}
  \bigl(U_3(a,b,c)-J(b,c)\bigr),\label{eq:factor12}\\
\Va-\Vc
&=\frac{b^2+bc+c^2}{T_2}
  \bigl(U_3(a,b,c)-E(b,c)\bigr)\label{eq:factor13}
\end{align}
show that both two-box candidates are at most $U_3(a,b,c)$. The Searcher
therefore guarantees
\[
a+U_3(a,b,c)=\Va.
\]

\section{Upper bound in the \texorpdfstring{$V_2$}{V2} regime}
Normalize $(a,b,c)=(1,r,ru)$ with $0<r,u\le1$, and define
\begin{align*}
A_2&=r^2(1+u)^2+r-u-1,\\
K_2&=1-r(1+u)+r^2(1-u-u^2),\\
Q_2&=-2r^3u^3-4r^3u^2-2r^3u-r^2u^3-3r^2u^2-3r^2u+ru+u+2.
\end{align*}
The exact identities
\begin{align*}
\Vb-\Va&=\frac{r^2A_2}{(1+r+ru)(r^2u+r^2+ru+r+1)},\\
\Vb-\Vc&=\frac{r^2u^2(1+r)K_2}
{(r^2u+r^2+ru+r+1)(r^2u^2+r^2u+r^2+ru+r+1)}
\end{align*}
show that the $V_2$ region is exactly $A_2\ge0$, $K_2\ge0$.

\begin{lemma}[Searcher certificate in the $V_2$ region]\label{lem:V2cert}
Suppose $A_2\ge0$ and $K_2\ge0$. If $Q_2\le0$, there is a Searcher
mixture supported on
\[
\{26,27,33,38,39\};
\]
if $Q_2\ge0$, there is a mixture supported on
\[
\{26,33,37,38,41\}.
\]
In both cases the mixture has nonnegative weights summing to one and guarantees
expected cost at most $\Vb$ against every Hider placement.
\end{lemma}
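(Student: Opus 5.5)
The plan is to make the weights explicit and check them exactly. Fix one of the two supports, say $S=\{26,27,33,38,39\}$, and let $M_S$ be the $6\times5$ cost matrix with entries $C_s(x)$ for $s\in S$ and $x\in\HH$. These entries come from the opening-count profiles of Proposition~\ref{prop:72} and are linear in $(1,r,ru)$.

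\textbf{Step 1: construct the weights.} I look for weights $w\in\mathbb R^5$ with $\sum_s w_s=1$ that equalize the five placements in the support $\HH_2$ of $h_2$:
\[
(M_Sw)(x)=\Vb \quad\text{for } x\in\HH_2 .
\]
Together with the normalization this gives six linear equations in five unknowns plus $\Vb$. The system is consistent because $h_2$ is optimal against the mixture and complementary slackness applies. Cramer's rule on a $5\times5$ minor then gives $w$ as rational functions of $(r,u)$. I clear the common denominator $\Delta$ and record the numerators $P_s$ as polynomials.

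\textbf{Step 2: reduce the claim to sign conditions.} With this $w$, Lemma~\ref{lem:V2cert} becomes three polynomial inequalities on the semialgebraic region $R_-=\{A_2\ge0,\ K_2\ge0,\ Q_2\le0\}$ (respectively $R_+$ for the second support):
\begin{enumerate}[label=(\alph*)]
\item $\Delta$ has constant sign on the region;
\item each $P_s\Delta\ge0$;
\item the residual inequality $\Vb-(M_Sw)(002)\ge0$ holds for the excluded placement $002$.
\end{enumerate}
After multiplying (c) by the positive denominators of $\Vb$ and of $w$, it becomes a polynomial inequality $G\ge0$.

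I expect one weight to vanish on $Q_2=0$, which is why the active support switches there. Concretely, $P_{27}$ and $P_{39}$ should carry a factor $-Q_2$, and $P_{37}$ and $P_{41}$ a factor $Q_2$, so that the two certificates agree on the boundary. I would try to factor these out symbolically first. Likewise $G$ should contain the factor $K_2$, since $002$ becomes tight exactly on the $V_2$/$V_3$ boundary. Removing these known factors leaves cofactors that should be strictly positive on the regions.

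\textbf{Step 3 (main obstacle): prove positivity of the cofactors.} The regions are curved, bounded by $A_2=0$, $K_2=0$ and $Q_2=0$. I would:
\begin{itemize}
\item map $[0,1]^2$ in $(r,u)$ to itself;
\item expand each cofactor in the tensor Bernstein basis;
\item subdivide dyadically.
\end{itemize}
Each subdivision cell falls into one of three cases:
\begin{itemize}
\item all Bernstein coefficients of the target polynomial are positive, which certifies the cell;
\item the Bernstein coefficients of one of the defining constraints (say $A_2$, $K_2$, or $\pm Q_2$) show it has the wrong sign on the cell, so the cell misses the region and is discarded;
\item otherwise, the cell is split further.
\end{itemize}

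Termination can fail near corners where a cofactor vanishes. The delicate points are the degenerate edges $r\to0$ and $u\to0$ and the triple point $r=u=(\sqrt5-1)/2$, where $V_1=V_2=V_3$. There I would first extract monomial factors $r^iu^j$. Failing that, I would combine the target with a multiple of a defining constraint in a positivstellensatz-style certificate, using target $+\lambda\cdot(\text{constraint})$ with rational $\lambda\ge0$ on the boundary cells, and verify all arithmetic in exact rationals.

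The same three steps handle the second support.
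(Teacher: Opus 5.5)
This is essentially the paper's proof: exact rational weights on the two stated supports that equalize $\HH_2$ at $\Vb$, the excluded placement $002$ settled through the factor $K_2$, and exact Bernstein subdivision with premise-based discarding for the weight numerators and for the extra denominator $2ru^2+5ru+2r-u$ that appears when $Q_2\le0$. (In fact every column of both supports costs exactly $a+b+2c$ against $002$, which gives the paper's closed form for $\overline C(002)-\Vb$.)

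Two side remarks need correcting, though neither breaks the method. First, the overdetermined system is consistent not because of complementary slackness, which presupposes the mixture, but because each support column has $h_2$-expected cost exactly $\Vb$, so the equalization equations already imply the normalization. Second, in the $\{26,33,37,38,41\}$ mixture only $w_{41}$ carries the factor $Q_2$ (and $w_{26}$ carries $A_2$), while the weights of the $\{26,27,33,38,39\}$ mixture stay strictly positive across $Q_2=0$, so the two certificates need not agree there, and they do not.
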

\begin{proof}
The exact rational weights are given in the supplement. Symbolic substitution
shows that the mixture equalizes the five placements in $\HH_2$ at value
$\Vb$. Writing $\overline C(x)$ for its expected cost against placement
$x$, the excluded placement satisfies
\[
\overline C(002)-\Vb
=-\frac{(a+b)(a^2-ab-ac+b^2-bc-c^2)}{a^2+ab+ac+b^2+bc}\le0,
\]
because $\Vb\ge\Vc$ is equivalent to nonnegativity of the second factor.
The weights sum to one identically. Their global nonnegativity reduces, after
positive denominators are removed, to a finite list of polynomial implications
on $[0,1]\times[0,1]$; the exact rational Bernstein certificates are described in
Section~\ref{sec:computerproof}.
\end{proof}
Lemma~\ref{lem:V2cert} shows that the Searcher guarantees $\Vb$ throughout
this regime.

\section{Upper bound in the \texorpdfstring{$V_3$}{V3} regime}
Set
\begin{align*}
A_3&=r^2u^4+2r^2u^3+r^2u^2+2r^2u+r^2+ru^3+r-u^2-u-1,\\
B_3&=r^2u^2+r^2u-r^2+ru+r-1.
\end{align*}
Notice that $B_3=-K_2$, so the two regime descriptions use the same
algebraic equation for the boundary $V_2=V_3$. The identities
\begin{align*}
\Vc-\Va
&=\frac{r^2A_3}
{(1+r+ru)(r^2u^2+r^2u+r^2+ru+r+1)},\\
\Vc-\Vb
&=\frac{r^2u^2(1+r)B_3}
{(r^2u+r^2+ru+r+1)(r^2u^2+r^2u+r^2+ru+r+1)}
\end{align*}
show that the $V_3$ region is exactly $A_3\ge0$, $B_3\ge0$. Introduce
\begin{align*}
X&=r^2u^2+3r^2u+r^2-2ru^2-ru+r-1,\\
Y&=-2r^4u^3-6r^4u^2-2r^4u+2r^4+2r^3u^3-r^3u^2-5r^3u-r^3\\
&\quad{}+r^2u^2+2r^2u+2r^2+ru+2r+1,\\
Z&=r^2u^2+r^2u+r^2-ru+r-1.
\end{align*}
\begin{lemma}[Searcher certificates in the $V_3$ region]\label{lem:V3cert}
Suppose $A_3\ge0$ and $B_3\ge0$. The following four cases cover the region,
and in each case there is a Searcher mixture on the indicated support that has
nonnegative weights summing to one and equalizes all six Hider placements at
$\Vc$.
\begin{center}
\begin{tabular}{@{}ccl@{}}
\toprule
Case & Policy support & Additional conditions\\
\midrule
A & $\{2,26,28,33,37,40\}$ & $Y\ge0$, $X\ge0$\\
B & $\{2,3,22,26,37,40\}$ & $Y\le0$\\
C & $\{2,5,28,33,37,40\}$ & $Y\ge0$, $X\le0$, $Z\ge0$\\
D & $\{5,19,28,33,37,40\}$ & $Y\ge0$, $X\le0$, $Z\le0$\\
\bottomrule
\end{tabular}
\end{center}
\end{lemma}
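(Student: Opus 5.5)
The proof will mirror Lemma~\ref{lem:V2cert}, with one simplification: no Hider placement is excluded under $h_3$, so every case requires exact equalization of all six placements at $\Vc$. There is no separate inequality for an excluded row. I work in the normalization $(a,b,c)=(1,r,ru)$ with $0<r,u\le1$.

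\textbf{Step 1: solve for the weights in each case.} Fix one of the four cases A--D. I take the six opening-count profiles in its support, which Proposition~\ref{prop:72} and the appendix make explicit. These give a $6\times6$ cost matrix $M(r,u)$ whose rows are indexed by $\HH$ and whose columns are the six policies. I want weights $w$ with
\[
M w=\Vc\,\mathbf 1 \quad\text{and}\quad \mathbf 1^{\top}w=1.
\]
This is overdetermined: seven equations in six unknowns. However, Lemma~\ref{lem:lower} shows that $h_3$ is equalizing, so $h_3^{\top}M=\Vc\,\mathbf 1^{\top}$. Hence $h_3^{\top}Mw=\Vc\,\mathbf 1^{\top}w$, and whenever $Mw$ is constant that constant equals $\Vc$ once the weights sum to one.

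It therefore suffices to solve the square system $Mw=t\mathbf 1$, $\mathbf 1^{\top}w=1$ for $(w,t)$. I will check symbolically that $\det M\not\equiv0$ on the case region, or that the augmented system has full rank there. Cramer's rule then gives each $w_j$ as a rational function $P_j/D$. Symbolic substitution verifies the six equalities and the normalization identically.

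\textbf{Step 2: reduce nonnegativity to polynomial sign conditions.} For each case, I clear the common denominator $D$ and any other factor that is positive on $(0,1]^2$, such as the $T_1$- and $T_2$-type factors appearing in the identities above. After this, nonnegativity of the weights becomes a finite list of implications of the form
\[
A_3\ge0,\ B_3\ge0,\ \pm X\ge0,\ \pm Y\ge0,\ \pm Z\ge0 \;\Longrightarrow\; P_j\ge0 .
\]
I expect the case boundaries $X=0$, $Y=0$, $Z=0$ to be exactly where some weight vanishes. At those boundaries the support changes, so $X$, $Y$, $Z$, or their negatives, should appear as factors of certain numerators, which makes those signs immediate. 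After factoring, the remaining numerator factors must be certified nonnegative on the semialgebraic region.

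Covering is a short separate argument. The sign patterns $Y\le0$; $Y\ge0,X\ge0$; $Y\ge0,X\le0,Z\ge0$; and $Y\ge0,X\le0,Z\le0$ exhaust every possibility, so cases A--D cover the region.

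\textbf{Step 3, the main obstacle: certifying the residual factors.} The hard part is certifying the residual polynomial factors on regions cut out by curved constraints. For each implication I would first try to prove it on the whole square $[0,1]^2$ by expanding in the Bernstein basis. Where coefficients of mixed sign appear, I would subdivide dyadically, discarding subsquares on which some hypothesis polynomial has all Bernstein coefficients negative, since those lie outside the case region. Near the triple point $r=u=(\sqrt5-1)/2$ and along the boundary curves, the factor may vanish on the boundary itself. There I would use a multiplier certificate instead: I would find a nonnegative combination $\lambda P_j-\mu\,(\text{hypothesis})$ whose Bernstein coefficients are all nonnegative on the subsquare, with $\lambda>0$ and $\mu\ge0$ rational. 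All arithmetic would be exact rational arithmetic, as in Section~\ref{sec:computerproof}. This yields a finite certificate tree for each case, which completes the lemma.
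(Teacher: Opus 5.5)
Your plan is essentially the paper's proof. The paper also obtains exact weights from the six-column support system, verifies equalization and normalization symbolically, reduces weight nonnegativity to polynomial implications after removing positive denominators, and certifies these by exact rational Bernstein subdivision that discards boxes where a premise fails. Your use of the equalizing property of $h_3$ to force the common value to be $\Vc$ is a tidy way to organize that symbolic check, and the paper's verifier needs no multiplier certificates.

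One point needs tightening. $\det M\not\equiv0$ does not justify dividing by $D$. As in the paper's ``Denominators and closure'' paragraph, you must write $D=r^\alpha u^\beta d(r,u)$ and certify that $d$ has a strictly positive Bernstein lower bound on the closed square, so that $D$ has fixed sign and never vanishes on the region.
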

\begin{proof}
The sign conditions are exhaustive. The exact rational weights are listed in
the supplement. Direct symbolic substitution verifies that they sum to one
and equalize the six placements. After positive denominators are removed,
nonnegativity of the weights becomes a finite family of polynomial implications
on $[0,1]\times[0,1]$. These implications are certified exactly by the Bernstein
subdivision procedure in Section~\ref{sec:computerproof}.
\end{proof}
Lemma~\ref{lem:V3cert} shows that the Searcher guarantees $\Vc$ throughout
the $V_3$ region. Together with the preceding sections, this proves
Theorem~\ref{thm:main}.

\section{Exact certificate and independent validation}\label{sec:computerproof}
The formal computer-assisted component is the primary symbolic certificate in
item~1 below. Items~2--4 are independent validation checks designed to detect
implementation or modeling errors.

\begin{enumerate}
\item \textbf{Primary symbolic certificate.}
The program \texttt{verify\_complete.py} enumerates the $42$ opening-count profiles,
builds the symbolic $6\times42$ matrix, verifies all lower-bound identities,
solves the six Searcher support systems exactly, and proves every remaining
polynomial implication by rational Bernstein subdivision. No floating-point
arithmetic enters the proof certificate.

\item \textbf{Independent policy enumerator and LP audit.}
The file \texttt{independent\_audit.py} does not import the primary verifier
and retains the identity of each initial Hider placement in its belief-state
representation. It independently obtains the same $42$ profiles. It then solves $402$
direct numerical zero-sum linear programs over rationally generated test
points. The largest discrepancy from $\max\{\Va,\Vb,\Vc\}$ is
$1.834\times10^{-10}$.

\item \textbf{Independent Bernstein-engine audit.}
The file \texttt{bernstein\_engine\_audit.py} independently recomputes
Bernstein coefficients by affine binomial transformation and checks all
$154$ dyadic nodes produced by the main verifier, including internal and
terminal nodes, against independent de Casteljau propagation.

\item \textbf{Structural enumeration.}
The file \texttt{structured\_policies.py} generates the $72$ elementary trees
of Proposition~\ref{prop:72} and confirms that their distinct opening-count profiles
coincide exactly with the independent recursive enumeration.
\end{enumerate}

For a polynomial written in tensor-product Bernstein form on a rectangle, its
minimum and maximum coefficients bound its range on that rectangle. The
verifier discards a box if a premise is impossible there, accepts it if every
target polynomial has nonnegative Bernstein lower bound, and otherwise bisects
the box dyadically. This enclosure-and-subdivision approach goes back to Bernstein-form range
methods such as Garloff~\cite{garloff1986}; Farouki~\cite{farouki2012}
provides a modern survey of the basis and its numerical properties. All
coefficients and subdivision endpoints are exact rational numbers.

\paragraph{Denominators and closure of the parameter square.}
All normalizing denominators in the Hider distributions and candidate values
are sums of positive monomials. For the Searcher weights, after the
normalization $(a,b,c)=(1,r,ru)$, every oriented denominator is written as
\[
r^\alpha u^\beta d(r,u),\qquad \alpha,\beta\ge0.
\]
The monomial factor is strictly positive on the theorem's domain
$0<r,u\le1$. Except in the branch $V_2$, $Q_2\le0$, each residual factor
$d$ has a strictly positive Bernstein lower bound on the closed unit square.
In that branch the only additional factor is
\[
E(r,u)=2ru^2+5ru+2r-u.
\]
Both $E$ and $A_2$ are strictly increasing in $r$; indeed,
$\partial E/\partial r=2u^2+5u+2>0$ and
$\partial A_2/\partial r=2r(1+u)^2+1>0$. At the unique zero
$r=u/((2u+1)(u+2))$ of $E$,
\[
A_2=-\frac{4u^5+23u^4+49u^3+47u^2+22u+4}
{(2u+1)^2(u+2)^2}<0.
\]
Hence $A_2\ge0$ implies $E>0$, and no weight denominator vanishes in its
claimed regime. The Bernstein procedure is applied only to the oriented
numerator polynomials. Its use of $[0,1]^2$ is a compact closure argument after
the positive factors $r^\alpha u^\beta$ have been removed; the rational weights
are not evaluated at $r=0$ or $u=0$.

\begin{table}[htbp]
\centering
\caption{Primary Bernstein-subdivision statistics. ``Outside'' boxes violate
a premise; ``certified'' boxes have nonnegative target lower bounds.}
\begin{tabular}{@{}lrrrr@{}}
\toprule
Certificate & Outside & Certified & Splits & Max depth\\
\midrule
$V_2$, $Q_2\le0$ & 16 & 4 & 19 & 8\\
$V_2$ auxiliary denominator & 1 & 1 & 1 & 1\\
$V_3$ case A & 1 & 1 & 1 & 1\\
$V_3$ case B & 11 & 7 & 17 & 9\\
$V_3$ case C & 5 & 3 & 7 & 6\\
$V_3$ case D & 20 & 10 & 29 & 11\\
\bottomrule
\end{tabular}
\end{table}

\section{Discussion}
The main structural feature is the nested product-form support hierarchy
\[
\HH_1\subset\HH_2\subset\HH_3.
\]
When the most expensive box dominates, the Hider forces at least one ball into
that box. As the costs become more balanced, the support first expands to five
placements, excluding only two balls in the cheapest box, and then expands to
the full product-form distribution. The intermediate five-placement regime is
not apparent from the previously solved equal-cost and two-box cases.

The result verifies the equalizing-property conjecture of
\cite{lidbetterlin2019} for the first heterogeneous three-box instance, while
also showing that identifying the correct product-form support is essential.
On the Searcher side, the apparent complexity is localized: all non-wasteful
deterministic adaptive policies reduce to $42$ opening-count profiles, while
randomized policies lie in their convex hull. Only $14$ pure profiles appear
in the upper-bound certificates.

The proof is exact but partly computer-assisted. The analytic $V_1$ argument
suggests that additional factorization may simplify parts of the $V_2$ and
$V_3$ certificates. Natural next problems include the three-box game with
three balls and the two-ball game with four boxes, where both the number and
the geometry of optimal supports may change.

\paragraph{Use of AI-assisted tools.}
AI-assisted tools were used for independent verification, consistency checks,
and language editing. The author takes full responsibility for all
mathematical claims, proofs, references, and computational results.

\appendix
\section{Structured Searcher policies used in the certificates}\label{app:policies}

Write $\mathcal P(i\mid\sigma\mid R_{jk})$ or $\mathcal P(i\mid\sigma\mid S_{jk})$ for the following deterministic policy. First open box $i$. If this succeeds, one ball remains; inspect the boxes in the order $\sigma$ until it is found. If the first opening fails, both balls lie in the other boxes $j,k$: open $j$ next. Under $R_{jk}$, repeat $j$ after a success; under $S_{jk}$, switch to $k$ after a success. A failed second opening identifies the remaining box, which is then opened as many times as necessary. This notation describes all non-wasteful deterministic policies after choosing $i$, $\sigma$, and one of the four two-box continuations.

\begingroup\small
\begin{center}
\begin{tabularx}{\textwidth}{@{}r l X@{}}
\toprule
Index & Compact rule & Certificate use\\
\midrule
2 & $C\!\mid\!CBA\!\mid\!R_{AB}$ & $V_3$: A, B\\
3 & $C\!\mid\!CAB\!\mid\!R_{BA}$ & $V_3$: B\\
5 & $C\!\mid\!CAB\!\mid\!R_{AB}$ & $V_3$: C, D\\
19 & $A\!\mid\!ABC\!\mid\!R_{CB}$ & $V_3$: D\\
22 & $B\!\mid\!CAB\!\mid\!S_{AC}$ & $V_3$: B\\
26 & $B\!\mid\!BCA\!\mid\!R_{AC}$ & $V_2$: both; $V_3$: A, B\\
27 & $C\!\mid\!ABC\!\mid\!R_{BA}$ & $V_2$: $Q_2\le0$\\
28 & $A\!\mid\!CBA\!\mid\!S_{BC}$ & $V_3$: A, C, D\\
33 & $A\!\mid\!ABC\!\mid\!S_{BC}$ & $V_2$: both; $V_3$: A, C, D\\
37 & $A\!\mid\!CAB\!\mid\!R_{BC}$ & $V_2$: $Q_2\ge0$; $V_3$: all\\
38 & $A\!\mid\!BCA\!\mid\!R_{BC}$ & $V_2$: both\\
39 & $A\!\mid\!ACB\!\mid\!R_{BC}$ & $V_2$: $Q_2\le0$\\
40 & $A\!\mid\!BAC\!\mid\!R_{BC}$ & $V_3$: all\\
41 & $A\!\mid\!ABC\!\mid\!R_{BC}$ & $V_2$: $Q_2\ge0$\\
\bottomrule
\end{tabularx}
\end{center}
\endgroup

For example, $\mathcal P(B\mid BCA\mid R_{AC})$ first opens $B$. After a success it checks $B,C,A$ in that order for the last ball. After a failure it opens $A$; success is followed by another $A$, and failure by two openings of $C$.

\section{Reproducibility}
The arXiv ancillary files include the exact-weight supplement, the figure
source, and the complete verification code. From the project root, run
\begin{verbatim}
python -m pip install -r anc/code/requirements.txt
make reproduce
\end{verbatim}
This regenerates the regime figure and both supplement data files, compiles the
main paper and supplement, executes the primary exact certificate and all three
independent audits, and checks that the committed generated artifacts are
current. The individual commands are documented in \texttt{README.txt}.

\end{document}